\definecolor{rblue}{RGB}{39,64,139}
\title{Duflo Isomorphism and Chern-Weil Theory}
\author{Seunghun Hong}
\address{Northwestern College, 101 7th St SW, Orange City, Iowa, U.S.A.}
\email{seunghun.hong@gmail.com}
\urladdr{diracoperat.org}
\keywords{transversally elliptic operators, distributional index, Duflo isomorphism, Chern-Weil theory}
\subjclass[2010]{Primary %
19K56
; Secondary %
 58J35
, 53C27
}
\theoremstyle{plain}
\newtheorem{thm}{Theorem}[section]
\newtheorem{lem}[thm]{Lemma}
\newtheorem{prop}[thm]{Proposition}
\theoremstyle{definition}
\newtheorem{defn}[thm]{Definition}
\theoremstyle{remark}
\newtheorem*{rmk}{Remark}
\newcommand{\C}{\mathbb{C}}
\newcommand{\R}{\mathbb{R}}
\newcommand{\Z}{\mathbb{Z}}
\DeclareMathOperator{\tr}{tr}
\DeclareMathOperator{\Str}{Str}
\newcommand{\vol}{\mathrm{vol}}
\DeclareMathOperator{\Aut}{Aut}
\DeclareMathOperator{\Ind}{Ind}
\DeclareMathOperator{\ad}{ad}
\DeclareMathOperator{\Duf}{Duf}
\DeclareMathOperator{\CW}{CW}
\newcommand{\cupp}{\mathbin{\smallsmile}}
\DeclareMathOperator{\Spin}{Spin}
\DeclareMathOperator{\SO}{SO}
\newcommand{\spin}{\mathfrak{spin}}
\newcommand{\so}{\mathfrak{so}}
\DeclareMathOperator{\Cl}{Cl}
\newcommand{\topf}{\mathrm{top}}
\newcommand{\Hom}{\mathrm{Hom}}
\DeclareMathOperator{\End}{End}
\newcommand{\Id}{\mathbf{1}}
\newcommand{\bas}{\mathrm{bas}}
\DeclareMathOperator{\ev}{ev}
\begin{document}
\begin{abstract}
We explain how the distributional index of a transversally elliptic operator on a principal $G$-manifold $P$ that is obtained by lifting a Dirac operator on $P/G$ can serve as a link between the Duflo isomorphism and Chern-Weil forms. 
\end{abstract}
\maketitle

\section{Introduction}

In the introduction of their book, Berline, Getzler, and Vergne \cite{bgv} promotes the idea that index theory via heat kernels is a quantization of Chern-Weil theory. Meanwhile, Alekseev and Meinrenken's work \cite{alekmein} on quantum Weil algebra shows that the Duflo isomorphism is a quantization of distributions. We explain how these two points of view come together as we consider the distributional index of transversally elliptic operators developed by Atiyah and Singer \cite{atiyah}.

The typical context of our story is where we have a closed even-dimensional spin (or spin-c) manifold $M$. There one naturally has a principal $G$-bundle $P\to M$, where $G$ is some compact Lie group, and a Dirac operator $D$ acting on sections of an equivariant vector bundle $P\times_GE$. By picking a connection on $P$, one can horizontally lift the Dirac operator on $M$ to $P$. The resulting differential operator $\tilde D$, which may operate on $C^\infty(P)\otimes E$, is non-elliptic. However, it is transversally elliptic in the sense of Atiyah and Singer \cite{atiyah}*{Def.~1.3, p.~7}, which means in our situation that $\tilde D$ is elliptic in horizontal directions relative to the selected connection on $P$. Continuing with the idea of Atiyah and Singer,  one has the distributional index $[\tilde D]$ of $\tilde D$, which is a formal $\Z$-linear combination of characters of $G$. It is a theorem of Atiyah and Singer that $[\tilde D]$ is a genuine distribution on $G$ \cite{atiyah}*{Thm.~2.2, p.~10}. 

More is true in our case. Owing to the free $G$-action on $P$, the distribution $[\tilde D]$ is supported at the identity, and it can be identified as an element of the center $\mathcal{Z}(\mathfrak{g})$ of the universal enveloping algebra generated by the Lie algebra $\mathfrak{g}$ of $G$. Thus $[\tilde D]$ is subject to (the inverse of) the Duflo isomorphism 
\[ \Duf\colon S(\mathfrak{g})^G\to \mathcal{Z}(\mathfrak{g}). \]
Here $S(\mathfrak{g})^G$ denotes the $G$-invariant subalgebra of the symmetric algebra generated by $\mathfrak{g}$; it can be identified as the algebra of $G$-invariant distributions on $\mathfrak{g}$ that are supported at the origin. So it makes sense to pair $\Duf^{-1}[\tilde D]$ with a $G$-invariant analytic function $\varphi$ defined on some neighborhood of the origin in $\mathfrak{g}$. One can now ask whether the following equation holds:
\begin{equation}
 \langle \Duf^{-1}[\tilde D],\varphi \rangle = \langle \hat D\cupp \hat \varphi,\hat M \rangle.
 \label{eq:dufchweil}
 \end{equation}
Here $\hat D$ is the index class of $D$; $\hat \varphi$ is the characteristic class on $M$ obtained from $\varphi$ by the Chern-Weil homomorphism, and $\hat M$ is the fundamental homology class of $M$ determined by the spin structure. 

Our aim is to show that Equation \eqref{eq:dufchweil} holds under an equivariance condition on the connection associated with $D$. Equation \eqref{eq:dufchweil} explicitly shows how the distributional index of $\tilde D$ serves has a link between the Duflo isomorphism and the Chern-Weil forms.

\section{Setup}

\subsection{Basic Notations} Throughout this article, $M$ denotes a closed oriented Riemannian manifold of dimension $n$. Let $D$ be a Dirac operator acting on the sections of an equivariant vector bundle 
\[ P\times_GE\to M, \]
where $P$ is the total space of a principal bundle
\[ \kappa\colon P\to M\]
whose fibers are isomorphic to a compact Lie group $G$, with $G$ acting on fibers by right multiplication, and $E$ is a finite-dimensional complex $G$-vector space associated with a representation
\[ \nu\colon G\to  \Aut(E).\]
We denote by 
\[ \nu_*\colon \mathfrak{g}\to \End(E) \]
the Lie algebra representation induced by the differential of $\nu$ at the identity.

We assume that $E$ is a \emph{graded} $\Cl(n)$-module. Regarding the irreducible ones, there is only one (up to isomorphism) if $n$ is odd, and there are two if $n$ is even. If we fix one over the other for the even case, then we may speak of \emph{the} irreducible graded $\Cl(n)$-module $S$. Under such circumstances, $E$ is of the form 
\begin{equation}
 E=S\otimes W,
\label{eq:gclmd}
\end{equation}
where $\Cl(n)$ acts canonically on $S$ and trivially on $W$.

\subsection{Lift of a Dirac Operator} The existence of a Dirac operator on the sections of $P\times_GE$ implies that the vector bundle ${P\times_GE}$ admits a $\Cl(M)$-module structure. This means that $E$ is a module over the Clifford algebra $\Cl(n)$ generated by the Euclidean space $\R^n$ and that we have a bundle map
\begin{equation}
\begin{split} \xymatrix@R=4ex@C=0.5em{ \Cl(M) \ar[rr]^-c\ar[rd] && P\times_G\End(E) \ar[dl]\\
	&M
	}\end{split} 
\label{eq:cliffmds}
\end{equation}
such that, when restricted to each fiber, we have an isomorphism of algebras
\[ \Cl(T_xM) \xrightarrow{c} \Cl(n)\subset\End(E).\]
 
We assume that the Dirac operator $D$ is associated with a connection form $\theta$ on $P$ in the sense that $D$ is locally of the form
\[ D=\sum_{i=1}^nc(\xi_i)\nabla_{\xi_i}\]
for any local orthonormal frame $\{\xi_i\}_{i=1}^n$ for the tangent bundle $TM$ of $M$, where $\nabla$ is the covariant derivative induced by $\theta$ on the space $\Gamma(P\times_GE)$ of sections of $P\times_GE$. 

We define the \emph{lift} $\tilde D$ of $D$ as the differential operator on $\Gamma(P\times E)=C^\infty(P)\otimes E$ that is locally of the form 
\[ \tilde D=\sum_{i=1}^n c(\xi_i)\tilde\nabla_{\tilde \xi_i} \]
where $\tilde \xi_i$ denotes the horizontal lift of $\xi_i$, and $\tilde\nabla$ is the covariant derivative for the trivial bundle $P\times E\to P$, also induced by $\theta$ so that the action of $\tilde\nabla$ on the invariant subspace of $\Gamma(P\times E)$ agrees with $\nabla$ under the usual identification  $\Gamma(P\times E)^G\cong  \Gamma(P\times_GE)$.

\subsection{$\mathfrak{g}$-Spin Condition} The connection $\theta$ induces a $G$-invariant metric on $P$ in the following way: Let $\langle \ {,} \ \rangle_M$ be the pullback of the metric of $M$ along the bundle projection $\kappa$, and let  $\langle \ {,} \ \rangle_\mathfrak{g}$ be an inner product of our choice on $\mathfrak{g}$ that is invariant under the adjoint action of $G$ on $\mathfrak{g}$. Then we define the metric $\langle \ {,} \ \rangle$ on $P$ by
\[
 \langle v, w \rangle = \langle v, w\rangle_M + \langle \theta(v),\theta(w)\rangle_\mathfrak{g}.
\]

With the metric on $P$ at hand, we have the Riemannian connection $\nabla^P$ for $\mathfrak{X}(P):=\Gamma(TP)$. For each $X\in\mathfrak{g}$, denote the fundamental vector field it generates on $P$ by $\tilde X$. Let $\tilde\xi$ denote the vector field on $P$ that is the lift of a vector field $\xi$ on $M$.  Then $\nabla^P_{\tilde X}\tilde \xi$ is again the lift, call it $\tilde \xi'$, of a vector filed $\xi'$ on $M$. In fact, $\nabla^P_{\tilde X}\tilde \xi =\nabla^P_{\tilde \xi}\tilde X$ (see \cite{bgv}*{Lem.~5.2, p.~169}) and, as a consequence, the value of $\nabla^P_{\tilde X}\tilde \xi$ at $p\in P$ is completely determined by the values of $\tilde \xi$ and $\tilde X$ at $p$, which are in turn determined by $\xi$ at $x:=\kappa(p)$ and $X$. So there is a map 
\[ \omega_x\colon \mathfrak{g}\to \so(T_xM) \]
such that the value of $\nabla^P_{\tilde X}\tilde \xi$ at any point along the fiber of $x$ is the lift of the image of $\omega_x(X)$ operated on $\xi$ at $x$. As an example, if $P$ is a Lie group that has $G$ as a subgroup, then $\omega$ is $1/2$ times the adjoint action of $\mathfrak{g}$ on the orthogonal complement of $\mathfrak{g}$ in the Lie algebra of $P$. For this reason, we are more interested in the linear endomorphism $2\omega_x$. There is another significance of the operator $2\omega_x$. Consider the exponential map at $p\in P$, which we denote by
\[ \exp_{p}\colon T_pP\to P.\]
For $X\in\mathfrak{g}$, let $\tilde X_p$ denote the value of $\tilde X$ at $p$. The differential of $\exp_p$ at $\tilde X_p$ satisfies
\begin{equation}
 \exp_{p*, \tilde X_p}(\tilde \xi_p) = \biggl( \ell_{p\cdot \exp(X)}\circ \frac{1-e^{2\omega_x(X)}}{2\omega_x(X)} \biggr)(\xi_x) 
\label{eq:jacpbdl}
\end{equation}
for all $\xi_x\in T_xM$, where $\ell_{p\cdot\exp(X)}$ denotes the operator that lifts vectors in $T_xM$ horizontally into $T_{p\cdot \exp(X)}P$. Equation \eqref{eq:jacpbdl} follows from a generic expression for differentials of exponential maps on Riemannian manifolds; see \mbox{Duistermaat} \cite{duistermaathk}*{Lem.~9.5, p.~110}.

At this point we introduce an equivariance condition for $\nabla^P$, namely, that
\begin{equation}
  [\nu_*(X),c(\xi_x)] = c(2\omega_x(X)\xi_x)
\label{eq:kequivcliff}
\end{equation}
holds for all $X\in\mathfrak{g}$ and all $\xi_x\in T_xM$. We shall refer to this condition as (for lack of better words) the  \emph{$\mathfrak{g}$-spin condition} for $\theta$. It implies, in particular, that $2\omega_x$ is a Lie algebra homomorphism for each $x\in M$. Using the bundle map $c$ to identify $T_xM$ with $\R^n$, the Lie algebra homomorphism $2\omega_x$ induces a Lie algebra representation
\[ \alpha_x\colon \mathfrak{g}\to \so(n).
 \label{eq:cliffso}
\]
This representation is equivalent to the Lie algebra representation
\begin{equation}
 \gamma_x\colon \mathfrak{g} \to \spin(n)\subset\Cl(n)
\label{eq:liegamma}
\end{equation}
defined by the relation
\begin{equation}
 \alpha_x(X)(v) = [\gamma_x(X),v]
\label{eq:cliffrep}
\end{equation}
for all $X\in\mathfrak{g}$ and all $v\in \R^n$. Here the bracket on the right-hand side denotes commutation in $\Cl(n)$. 

In terms of the Lie algebra representation $\gamma_x$,  the $\mathfrak{g}$-spin condition can be written as
\begin{equation}
[\nu_*(X),c(\xi_x)] = [ \gamma_x(X),c(\xi_x)].
\label{eq:gcliffeqvar}
\end{equation}
This commutation relation implies that the $\mathfrak{g}$-action on $E$ respects the factorization \eqref{eq:gclmd}, so that
\[ \nu_*=\gamma_x\otimes\Id+\Id\otimes\tau_x \]
for some Lie algebra representation 
\[ \tau_x\colon \mathfrak{g}\to \End(W). \]

\begin{rmk}
The $\mathfrak{g}$-spin condition is always satisfied if the Lie algebra representation $\gamma_x$ is induced by a Lie group representation $G\to\Spin(n)$, in which case, we have $TM\cong P\times_G\R^n$, where $G$ acts on $\R^n$ through the double covering $\Spin(n)\to\SO(n)$. That includes the case where $G=\Spin(n)$ and $P$ is a spin structure for $M$.
\end{rmk}

\section{Distributional Index}

The lifted operator $\tilde D$ on $P$ is by construction a transversally elliptic operator on $E$-valued functions on $P$. Because the connection $\theta$ is $G$-invariant, $\tilde D$ is also $G$-invariant,  so the kernel of $\tilde D$ is a $G$-space. Denote the even and odd subspaces of the kernel as $\ker(\tilde D_+)$ and $\ker(\tilde D_-)$, respectively. Let $\hat G$ be the unitary dual of $G$, that is, the set of unitary equivalence classes of irreducible unitary representations of $G$. For each $\rho\in \hat G$, let $V_\rho$ denote the representation space of any representation in the class of $\rho$. It is a result of Atiyah and Singer \cite{atiyah}*{Lem.~2.3, p.~10} that the pairings 
\[ \langle \ker \tilde D_\pm, V_\rho \rangle := \dim \Hom_G(\ker \tilde D_\pm, V_\rho) \]
are finitely valued for all $\rho\in\hat G$. So 
\[ [\ker \tilde D_\pm] := \sum_{\rho\in\hat G} \langle \ker \tilde D_\pm, V_\rho \rangle \rho \]
are well-defined elements of the formal representation group 
\[ \hat R(G):=\prod_{\rho\in\hat G}\Z\cdot \rho. \]
Moreover, identifying $\rho$ with its character $\chi_\rho$, the formal sums $[\ker \tilde D_\pm]$ converge in the distributional sense \cite{atiyah}*{Thm.~2.2, p.~10}. The \emph{distributional index} of $\tilde D$ is then defined as
\begin{align}
 [\tilde D]:= \ &[\ker \tilde D_+]-[\ker \tilde D_-] \notag \\
= \ & \sum_{\rho\in\hat G}(\langle \ker \tilde D_{+},V\rangle-\langle \ker \tilde D_{-},V\rangle)\chi_\rho. \label{eq:distind}
\end{align}

\begin{prop}\label{prop:dtindprg}
Let $f$ be a smooth class function on $G$. Let $P_t$ be the heat kernel associated with the (scalar) Laplacian on $P$. Then
\begin{equation}
\langle [\tilde D] , f \rangle = \int_M\int_G\Str(P_{t}(p,p\cdot g)f(g)\nu(g^{-1}))\,dg\,dx. 
\label{eq:dtindprg}
\end{equation} 
Here $\Str$ denotes the super trace for graded operators, and $p$ is an arbitrary point in the fiber over $x$.
\end{prop}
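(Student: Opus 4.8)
The plan is to reduce the statement to the McKean--Singer identity applied on the base $M$, one $G$-isotypic component at a time, and then to recognise the scalar heat kernel $P_t$ through a Lichnerowicz--Weitzenböck formula for the lifted operator.

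First I would record the isotypic decomposition. Because $G$ acts freely on $P$, the natural unitary $G$-action $R$ on $L^2(P)\otimes E$ — the one whose invariant part is $\Gamma(P\times_GE)$ — decomposes $C^\infty(P)\otimes E=\bigoplus_{\rho\in\hat G}V_\rho\otimes N_\rho$, where each multiplicity space $N_\rho$ is the space of smooth sections of a $\Z/2$-graded vector bundle $\mathcal F_\rho\to M$. By Schur's lemma the $G$-invariant operator $\tilde D$ takes the form $\bigoplus_\rho\Id_{V_\rho}\otimes D_\rho$, each $D_\rho$ being a first-order operator of Dirac type on $\mathcal F_\rho$ over the closed manifold $M$; since $\tilde D$ is transversally elliptic, each $D_\rho$ is elliptic. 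The kernel meets the $V_\rho$-isotypic part in $V_\rho\otimes\ker(D_\rho)_\pm$, so $\langle\ker\tilde D_\pm,V_\rho\rangle=\dim\ker(D_\rho)_\pm$ and the coefficient of $\chi_\rho$ in \eqref{eq:distind} is $\Ind(D_\rho)$. Pairing with the class function $f$ therefore gives $\langle[\tilde D],f\rangle=\sum_{\rho}\bigl(\int_G f(g)\chi_\rho(g)\,dg\bigr)\Ind(D_\rho)$, and McKean--Singer on each $D_\rho$ turns the indices into super traces: $\Ind(D_\rho)=\Str_{N_\rho}(e^{-tD_\rho^2})$ for every $t>0$.

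Next I would recognise the right-hand side of \eqref{eq:dtindprg} as the super trace of a trace-class operator on $P$. Put $\pi(f):=\int_G f(g)\,R(g)\,dg$; this bounded operator has Schwartz kernel supported on the fibrewise diagonal $\{(p,pg):g\in G\}$ with fibre density $f$ twisted by $\nu$, so $(e^{-t\Delta_P}\otimes\Id_E)\circ\pi(f)$ is trace class, and computing its kernel from $P_t(p,q)\Id_E$ and the kernel of $\pi(f)$ identifies its super trace — with the natural conventions for $R$ and for the pairing — with the right-hand side of \eqref{eq:dtindprg}, the fibre integral producing the integral over $G$ and the base integral the integral over $M$ (the integrand being constant along fibres by $G$-invariance of $\Delta_P$ and of the grading). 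On the other hand $\pi(f)$ acts on $V_\rho\otimes N_\rho$ as the scalar $(\dim V_\rho)^{-1}\int_G f\chi_\rho$ by Schur's lemma (here $f$ being a class function is used), so this same super trace equals $\sum_\rho\bigl(\int_G f\chi_\rho\,dg\bigr)\,\Str_{N_\rho}(e^{-tL_\rho})$, where $L_\rho$ is the generalized Laplacian on $\mathcal F_\rho$ induced by $\Delta_P\otimes\Id_E$.

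Comparing the two expansions, the proposition reduces to $\Str_{N_\rho}(e^{-tL_\rho})=\Ind(D_\rho)$ for every $\rho$, and this is the step I expect to be the main obstacle. The tool is a Weitzenböck computation for the lift (in the spirit of \cite{bgv}): since the metric makes $\kappa$ a Riemannian submersion with totally geodesic fibres — a consequence of the identity $\nabla^P_{\tilde X}\tilde\xi=\nabla^P_{\tilde\xi}\tilde X$ recalled above — and the connection form vanishes on horizontal vectors, squaring $\tilde D$ and reorganising the vertical derivatives produces an identity of the shape $\tilde D^2+\mathrm{Cas}^{\mathrm{diag}}_G=\Delta_P\otimes\Id_E+\mathcal R$, where $\mathrm{Cas}^{\mathrm{diag}}_G$ is the Casimir of the diagonal $G$-action (the scalar $c_\rho$ on the $\rho$-summand) and $\mathcal R\in\Gamma(\End E)$ is the usual zeroth-order curvature endomorphism. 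Restricting to $\mathcal F_\rho$ exhibits $L_\rho$ as a generalized Laplacian differing from $D_\rho^2$ by a grading-preserving zeroth-order term built from $c_\rho$, the scalar curvatures and the bundle curvatures; once one checks — and this is the delicate bookkeeping — that this term carries no curvature two-form component beyond those already present in the Lichnerowicz formula for $D_\rho^2$, the standard Getzler-rescaling argument applies verbatim: the super traces of the heat coefficients of $L_\rho$ in degrees less than $n$ vanish, and the constant term is the same $\widehat A$-type density as for $D_\rho^2$. Hence $\Str_{N_\rho}(e^{-tL_\rho})$ is independent of $t$ and equals $\Ind(D_\rho)$, and assembling the displays yields \eqref{eq:dtindprg}; its independence of $t$ emerges as a byproduct.
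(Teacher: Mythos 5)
Your first two reductions are the paper's own: the isotypic decomposition, the identification of the coefficient of $\chi_\rho$ with $\Ind(D_\rho)$, McKean--Singer on each $D_\rho$, and the recognition of the right-hand side of \eqref{eq:dtindprg} as $\sum_\rho\bigl(\int_G f\chi_\rho\,dg\bigr)\Str_{N_\rho}(e^{-tL_\rho})$ with $L_\rho$ the descent of the scalar heat operator. The gap is in your closing step. The assertion that $\Str_{N_\rho}(e^{-tL_\rho})$ is independent of $t$ and equals $\Ind(D_\rho)$ cannot be correct as stated: by your own Weitzenb\"ock identity, $L_\rho$ (the operator induced by $\Delta_P\otimes\Id_E$) differs from $D_\rho^2$ at least by the Casimir constant $c_\rho$ (plus whatever survives of $\mathcal R$), and already for $L_\rho=D_\rho^2\pm c_\rho$ one gets $\Str(e^{-tL_\rho})=e^{\mp tc_\rho}\Ind(D_\rho)$, which depends on $t$ whenever $c_\rho\neq0$ and $\Ind(D_\rho)\neq0$. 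The McKean--Singer $t$-independence argument needs the generalized Laplacian to be the square of an odd operator, so that $\frac{d}{dt}\Str$ is the supertrace of a supercommutator; it does not apply to $L_\rho$. Getzler rescaling is equally unable to rescue the claim: it controls only the $t\to0+$ limit of the heat supertrace, not its value at fixed $t$, so it cannot produce the exact identity your chain requires.

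What the paper does at exactly this point is invoke the identity $\tilde D_\rho^2+\Delta_P=C_\rho$ from \cite{bgv}*{Prop.~5.6, p.~172}: with the connection metric and the $\mathfrak{g}$-spin condition there is no leftover endomorphism $\mathcal R$ at all --- the Lichnerowicz curvature terms are absorbed exactly into the vertical Laplacian/Casimir, which is the ``delicate bookkeeping'' you defer. Consequently the scalar heat kernel on the $\rho$-isotypic component differs from the kernel of $e^{-t\tilde D_\rho^2}$ only by the explicit scalar factor $e^{-tC_\rho}$, and the paper then disposes of that factor by the $O(1)$ small-time estimate of Proposition \ref{prop:locindrho}; in effect \eqref{eq:dtindprg} is established and used in the $t\to0+$ asymptotic sense, which is all the main theorem needs. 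To repair your argument you must either prove the exact constancy of $\tilde D^2-\Delta_P\otimes\Id$ on isotypic components (rather than leave a curvature remainder to be handled by rescaling), or keep the Casimir factor and argue asymptotically in $t$ as the paper does; as written, the statement that closes your proof is false.
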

\begin{rmk}
Equation \eqref{eq:dtindprg} shows that $[\tilde D]$ indeed has point support at the identity, owing to the finite-propagation property of the heat kernel (see \cite{roe}*{Prop.~7.24, p.~107}).
\end{rmk}
\begin{proof}
We identify the domain $\Gamma(P\times E)$ of $\tilde D$ with the following space: 
\[ \mathcal{D}(P,E) := C^\infty(P)\otimes E.\]
We can decompose this space compatibly with regards to the sum  \eqref{eq:distind}:
\[ \mathcal{D}(P,E) \cong \bigoplus_{\rho\in \hat G} \mathcal{D}(P,E)_\rho, \]
where $\mathcal{D}(P,E)_\rho$ is the $\rho$-isotypic component of $\mathcal{D}(P,E)$. In other words,
\[ \mathcal{D}(P,E)_\rho = (C^\infty(P)\otimes E\otimes V_{\check\rho})^G\otimes V_\rho.\]
Here $\check\rho$ denotes the contragredient of $\rho$.  Note that the operator $\tilde D$ acts trivially on the factors in $V_\rho$ or $V_{\check\rho}$ that appears  above. Denote by $\tilde D_{\rho}$ the restriction of $\tilde D$ to $(C^\infty(P)\otimes E\otimes V_\rho)^G$. Let $D_{\rho}$ be  the Dirac operator $D$ twisted by $V_{\rho}$. Then the action of $\tilde D_{\rho}$ matches with that of $D_\rho$ under the usual identification $(C^\infty(P)\otimes E\otimes V_\rho)^G\cong \Gamma(P\times_G(E\otimes V))$. Thus, the coefficient of $\chi_\rho$ in Equation \eqref{eq:distind} equals the ordinary index of $D_\rho$. In other words,
\[  \langle \ker \tilde D_{+},V\rangle-\langle \ker \tilde D_{-},V\rangle = \Ind(D_\rho).\]

Because $D_\rho$ is elliptic and Fredholm, so is $\tilde D_\rho$. Moreover, if we write the  Laplacian on $P$ as $\Delta_P$, then $\tilde D_\rho^2+\Delta_P$ is equal to a constant operator $C_\rho$, namely, the Casimir associated with $\rho$ (see \cite{bgv}*{Prop.~5.6, p.~172}). So $L_\rho:=-\Delta_P+C_\rho$ is a generalized Laplacian that agrees with $\tilde D_\rho^2$ on $(C^\infty(P)\otimes E\otimes V_\rho)^G$. Let $P_{t}$ be the heat kernel associated with $\Delta_P$, that is, the integral kernel of the operator $e^{-t\Delta_P}$ for $t\in(0,\infty)$. Owing to the McKean-Singer formula (see \cite{bgv}*{Thm.~3.50, p.~124; Prop.~5.7, p.~173}), 
\[ \Ind(D_\rho) = \Str(e^{-tL_\rho})=e^{-tC_\rho}\int_M\int_G\Str(P_t(p,p\cdot g)\nu(g)^{-1})\chi_{\check\rho}(g)\,dg\,dx. \]
Here $p$ is a point of our choice in the fiber over $x$; the integral is well-defined owing to the equivariance of the heat kernel $P_t$, which in turn follows from the equivariance of the Laplacian $\Delta_P$; see \cite{bgv}*{\S~5.2}. Moreover, as we shall see later in Proposition \ref{prop:locindrho}, the integral is of $O(1)$  for $t\to0+$. So we may drop the factor $e^{-tC_\rho}$ from the above equation.

In short, we may express $[\tilde D]$ as follows:
\begin{equation*}
 [\tilde D]  = \sum_{\rho\in\hat G} \biggl(\int_M\int_G\Str(P_{t}(p,p\cdot g)\nu(g^{-1}))\chi_\rho(g)\,dg\,dx\biggr) \chi_\rho.
\label{eq:distindker}
\end{equation*} 
Then the pairing of $[\tilde D]$ with a function $f\in C^\infty(G)$ can be calculated as follows:
\[ 
\langle [\tilde D] , f \rangle = \sum_{\rho\in\hat G} \biggl(\int_M\int_G\Str(P_{t}(p,p\cdot g)\nu(g^{-1}))\chi_\rho(g)\,dg\,dx\biggr)\langle \chi_\rho,f\rangle. 
\]
If $f$ is a class function, then, since the characters of $G$ form an orthonormal basis for square-integrable class functions on $G$, we have $\sum_{\rho\in G} \chi_\rho(g)\langle \chi_\rho,f\rangle = f(g)$. Therefore, we have the desired equation \eqref{eq:dtindprg}.
\end{proof}

\section{Chern-Weil Forms}
We quickly recall the construction of the Chern-Weil homomorphism.  As a preliminary remark, suppose we have a formal power series $\varphi\in \R[[\mathfrak{g}^*]]$. Let $\wedge(N)$ be the exterior algebra generated by some finite-dimensional vector space $N$ over $\R$, and  let $\wedge^+ (N)$ be its subalgebra generated by elements of even degree. Then the formal power series $\varphi$ defines a map $\mathfrak{g}\otimes\wedge^+(N)\to\wedge^+(N)$ in the following way. Identify $\mathfrak{g}$ with $\mathfrak{g}\otimes\{1\}\subset \mathfrak{g}\otimes\wedge^+(N)$. By duality, the evaluation of $\chi\in\mathfrak{g}^*$ at an arbitrary element $\eta=\sum X_j\otimes\eta_j \in\mathfrak{g}\otimes\wedge^+(N)$ takes the value $\chi(\eta)=\sum \chi(X_j)\eta_j$. The evaluation map $\ev_\eta\colon \mathfrak{g}^*\to \wedge^+(N)$, $\chi\mapsto \chi(\eta)$, extends uniquely as an algebra homomorphism to $\ev_\eta\colon \R[[\mathfrak{g}^*]]\to \wedge^+(N)$. Then $\ev_\eta(\varphi)=:\varphi(\eta)$ is the evaluation of $\varphi$ at $\eta$. Note that $\ev_\eta$ factors through $S(\mathfrak{g})$. All of this makes sense even if we replace $N$ with $N_\C:= N\otimes\C$.

Now let $\Omega(P)$ denote as usual the algebra of differential forms on $P$. The pullback 
\[
 \kappa^*\colon \Omega(M)\to \Omega(P)
\]
induced by the bundle projection is an injective algebra homomorphism; its image is the algebra $\Omega_{\bas}(P)$ of basic forms on $P$. So $\kappa^*$ has a left inverse (push-forward), which we denote by
\[
 \kappa_*\colon \Omega_\bas(P)\to \Omega(M).
\]
The curvature $\Theta$ of our connection $\theta$ is an element of $\mathfrak{g}\otimes\Omega^+_{\bas}(P)$, so it makes sense to evaluate a formal power series $\varphi\in\R[[\mathfrak{g}^*]]$ at $\Theta/2\pi i\in \mathfrak{g}\otimes\Omega^+_\bas(P)_\C$. The resultant $\varphi(\Theta/2\pi i)$ is a basic form on $P$, and we can apply the push-forward $\kappa_*$ to it. This process yields an algebra homomorphism, namely,
\[ \begin{array}{cccc}
 \CW\colon & \R[[\mathfrak{g}^*]]^\mathfrak{g}&\to&\Omega^+(M)_\C,\\
 &\varphi&\mapsto& \kappa_*\varphi(\Theta/2\pi i).
 \end{array}
\]
We refer to $\CW(\varphi)$ as the \emph{Chern-Weil form} of $\varphi$. The high point of Chern-Weil theory is that the de~Rham cohomology class of the Chern-Weil form $\CW(\varphi)$ is a characteristic class.

As demonstrated by Berline and Vergne \cite{berlinevergne}, a similar ``construction'' occurs in heat kernel calculations. This is because there is a vector space isomorphism
\[ \spin(n) \cong \wedge^2(\R^n)\]
by virtue of the \emph{Chevalley map} 
\[ \sigma\colon \Cl(n)\to \wedge(\R^n), \]
which is defined, in terms of the standard orthonormal basis $\{e_i\}_{i=1}^{n}$ of $\R^n$, by the equation
\[ \sigma(e_{i_1}\dotsb e_{i_k}) = e_{i_1}\wedge\dotsb\wedge e_{i_k} \]
for any subset $\{e_{i_1},\dotsc,e_{i_k}\}$ of the basis. The Chevalley map is a vector space isomorphism, and the image of $\spin(n)$ is exactly $\wedge^2(\R^n)$. 

The calculation that mimics the construction of the Chern-Weil map is captured in Lemma \ref{lem:intcw} below. But first, we set up some notations.

\begin{defn}
For each $x\in M$, we denote by 
\[ \lambda_x\colon \mathfrak{g}\to \wedge^2(\R^n) \]
the composition of the Lie algebra homomorphism \eqref{eq:liegamma} with the Chevalley map. This may be identified as an element of $\mathfrak{g}^*\otimes\wedge(\R^n)$. Applying the isomorphism $\mathfrak{g}^*\cong\mathfrak{g}$ by the inner product, we get the element
\[ \Lambda := \sum_{i=1}^{\dim\mathfrak{g}} X_i\otimes \lambda(X_i)\in \mathfrak{g}\otimes\wedge^+(\R^n),\]
where $\{X_i\}_{i=1}^{\dim\mathfrak{g}}$ is any orthonormal basis for $\mathfrak{g}$ (the definition does not depend on the choice of the basis). 
\end{defn}

\begin{lem}\label{lem:intcw}
Let $h_t$ be the Euclidean heat kernel (the Gaussian function) on $\mathfrak{g}$. Let $\varphi$ be an analytic function defined near the origin of $\mathfrak{g}$. Let $\psi$ be a $G$-invariant bump function supported within the domain of $\varphi$. Then the $\wedge(\R^n)$-valued function
\[ t  \mapsto   \int_{\mathfrak{g}} h_t(X) \psi(X) \varphi(X)e^{-\lambda(X)}\,dX\]
has an asymptotic expansion $\sum^\infty_{k=0}\Psi_kt^k$ for $t\to0+$. (The asymptotic expansion is independent of the choice of $\psi$.) The $k$th coefficient $\Psi_k$ is contained in $\bigoplus_{q=0}^k\wedge^{2q}(\R^n)$. If $k\le n/2$, then the component of $\Psi_k$ of degree $2k$ (the highest degree part) is equal to that of $\varphi(-2\Lambda)\in \wedge^+(\R^n)$.
\end{lem}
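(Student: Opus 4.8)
The plan is to read the statement off as a small-$t$ Laplace (Gaussian) asymptotic expansion and then to extract the top-degree coefficient by Wick's theorem. First I would dispose of $\psi$: since $\psi\equiv 1$ near the origin and $h_t(X)=(4\pi t)^{-\dim\mathfrak{g}/2}e^{-|X|^2/4t}$ is rapidly decreasing outside any fixed neighbourhood of $0\in\mathfrak{g}$ as $t\to 0^+$, the contribution of the region where $\psi\not\equiv 1$ is $O(t^\infty)$ in $\wedge(\R^n)$; hence the function in the statement agrees, up to an $O(t^\infty)$ error, with the model Gaussian integral of the germ of $\varphi(X)e^{-\lambda(X)}$ at the origin, and its asymptotic expansion is therefore independent of $\psi$. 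Here $e^{-\lambda(X)}=\sum_{j\ge 0}(-1)^j\lambda(X)^{\wedge j}/j!$ is a finite sum (terms with $2j>n$ vanish), and $\varphi$ may be replaced by its Taylor series $\sum_{l\ge 0}\varphi_l$ at $0$, with $\varphi_l$ homogeneous of degree $l$.

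Grouping by homogeneity in $X$, write $\varphi(X)e^{-\lambda(X)}=\sum_{m\ge 0}Q_m(X)$ with $Q_m=\sum_{l+j=m}(-1)^j\varphi_l\,\lambda^{\wedge j}/j!$, whose $(l,j)$-term is valued in $\wedge^{2j}(\R^n)$. The substitution $X=\sqrt{t}\,Z$ carries $h_t(X)\,dX$ to $h_1(Z)\,dZ$ and $Q_m(X)$ to $t^{m/2}Q_m(Z)$, so the usual Gaussian tail estimate gives the asymptotic expansion $\sum_m t^{m/2}\int_{\mathfrak{g}}h_1(Z)Q_m(Z)\,dZ$. The reflection $Z\mapsto -Z$ fixes $h_1$ and multiplies $Q_m$ by $(-1)^m$, so the half-integer powers of $t$ drop out and we are left with a power series whose coefficient is $\Psi_k=\int_{\mathfrak{g}}h_1(Z)Q_{2k}(Z)\,dZ$.

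The $\wedge^{2j}$-component of $\Psi_k$ is $\tfrac{(-1)^j}{j!}\int_{\mathfrak{g}}h_1(Z)\,\varphi_{2k-j}(Z)\,\lambda(Z)^{\wedge j}\,dZ$, which I would evaluate by Wick's theorem on the $2k$ linear factors present, using that the only non-vanishing contraction is $\int h_1(Z)Z^aZ^b\,dZ=2\delta_{ab}$ — the time-$1$ variance of the heat kernel, which is exactly where the factor $2$ in $-2\Lambda$ enters. For $j=k$, the contractions pairing each of the $k$ factors $\lambda(Z)$ with a distinct factor of $\varphi_k(Z)$ reproduce $\varphi_k$ evaluated at $-2\Lambda=\sum_i X_i\otimes(-2\lambda(X_i))$ through the homomorphism $\ev_{-2\Lambda}$ introduced above (the $k!$ such pairings cancelling the $1/k!$, each contraction contributing $-2$); since $\varphi_k(-2\Lambda)$ is the degree-$2k$ part of $\varphi(-2\Lambda)$, this yields the last assertion once the degree bound is in place.

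The step I expect to be the main obstacle is that degree bound itself — that $\Psi_k$ has no exterior component above degree $2k$, equivalently that the $\wedge^{2j}$-component vanishes for $j>k$. When $j>k$ the Wick expansion must contract some factor $\lambda(Z)$ against another, producing sums such as $\sum_i\lambda(X_i)^{\wedge 2}\wedge(\cdots)$ and their iterates, which one has to show are annihilated in the relevant exterior degree. This is where the structure of $\lambda_x=\sigma\circ\gamma_x$ has to be exploited: that $\gamma_x$ lands in $\spin(n)$, that the $\mathfrak{g}$-spin condition makes it a Lie-algebra homomorphism, and that the inner product on $\mathfrak{g}$ built into $\Lambda$ is $\Ad$-invariant — together with keeping the two gradings (homogeneity in $X$ and exterior degree in $\R^n$) synchronised through the Wick expansion. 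Making this vanishing precise is the delicate point; everything preceding it is the routine Laplace calculation.
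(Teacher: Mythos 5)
Your reduction is essentially the paper's own route: the paper simply defers the Gaussian computation to Duistermaat's Lemma~11.3, whose proof is the rescaling-plus-Wick argument you spell out, and your formula $\Psi_k=\int_{\mathfrak{g}}h_1(Z)Q_{2k}(Z)\,dZ$, the parity argument killing half-integer powers, the disposal of $\psi$, and the appearance of $-2\Lambda$ through the variance $\int h_1(Z)Z^aZ^b\,dZ=2\delta_{ab}$ are all correct.

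The genuine gap is the vanishing you explicitly leave open, and it sits exactly at the point the paper isolates as the only thing to be checked beyond Duistermaat: since the index sums in a Wick pairing factorize pair by pair and $\wedge^2$-elements are central in $\wedge(\R^n)$, every pairing that contracts two $\lambda$-slots with each other carries the factor $\sum_{i=1}^{\dim\mathfrak{g}}\lambda(X_i)\wedge\lambda(X_i)$; hence the single identity $\sum_i\lambda(X_i)\wedge\lambda(X_i)=0$ for an orthonormal basis of $\mathfrak{g}$ is what is needed (no ``iterates'' have to be handled separately), and it is needed not only for the degree bound but already in your $j=k$ computation: pairings that contract two $\lambda$-factors together and, correspondingly, two $\varphi$-factors together also land in degree $2k$, and you silently discarded them when you kept only the bijective $\lambda$--$\varphi$ pairings. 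This identity is not a formal consequence of the Wick calculus; it is where the $\mathfrak{g}$-spin condition (making $\gamma_x$ a Lie algebra homomorphism into $\spin(n)$), the $\Ad$-invariance of the inner product built into $\Lambda$, and the Jacobi identity actually enter -- indeed the paper's entire proof consists of the citation to Duistermaat plus precisely this verification. Without establishing it, your argument proves the existence of the expansion but neither the containment $\Psi_k\in\bigoplus_{q\le k}\wedge^{2q}(\R^n)$ nor the asserted formula for the degree-$2k$ component, so the substantive content of the lemma is still missing.
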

\begin{proof}[Remark on the proof]
This lemma is similar in form to Lemma~11.3 in Duistermaat \cite{duistermaathk}*{p.~137}. The proof given there can be carried over almost verbatim. The only extra thing that needs to be checked is that $\sum_{i=1}^{\dim\mathfrak{g}}\lambda(X_i)\lambda(X_i)=0$ when $\{X_i\}_{i=1}^{\dim\mathfrak{g}}$ is an orthonormal basis for $\mathfrak{g}$; this can be verified using the Jacobi identity of the Lie bracket. We omit the details.
\end{proof}

The element $\varphi(-2\Lambda)$ appearing in Lemma \ref{lem:intcw} is a Chern-Weil form in disguise (provided that $\varphi$ is $G$-invariant), as implied by the next lemma. Before stating the lemma, recall that we have a smooth map $c\colon\Cl(M)\to \Cl(n)$ that is an algebra isomorphism when restricted to the fiber over any $x\in M$. This yields, via the Chevalley identification, a smooth map $\wedge (TM)\to \wedge(\R^n)$ that is a vector space isomorphism when restricted to the fiber over $x\in M$. Though it is an abuse of notation, we shall denote this map also as
\[ 
c\colon \wedge(TM)\to \wedge(\R^n).
\]

One more notation; consider the map $\sharp\colon \Omega^1(M)\to \mathfrak{X}(M)$ that maps a $1$-form to its dual vector field relative to the metric. This induces an algebra isomorphism 
\[
 \sharp\colon \Omega(M) \to \wedge\mathfrak{X}(M),
\]
which maps differential forms to polyvector fields (so-called the ``raising of indices'').

\begin{lem}\label{lem:chernweil}
Consider the polyvector field $(\kappa_*\Theta)^\sharp$, namely, the one obtained by taking the push-forward $\kappa_*\Theta$ of the curvature form $\Theta$ along the bundle projection and then raising its indices. Denote the value of this polyvector field at an arbitrary point $x\in M$ by  $(\kappa_*\Theta)^\sharp_x$. Then 
$c((\kappa_*\Theta)^\sharp_x)=2\Lambda$.
\end{lem}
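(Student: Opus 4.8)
The plan is to compute both sides of the claimed identity $c\bigl((\kappa_*\Theta)^\sharp_x\bigr) = 2\Lambda$ by unwinding the definitions in a local orthonormal frame, and to recognize that the linear map $\omega_x$ introduced in the setup is precisely the Chern--Weil avatar of the curvature. First I would fix $x \in M$, choose $p \in P$ with $\kappa(p) = x$, and pick a local orthonormal frame $\{\xi_i\}_{i=1}^n$ for $TM$ near $x$; via the identification $c\colon \Cl(T_xM) \xrightarrow{\sim} \Cl(n)$ this transports to the standard basis $\{e_i\}$ of $\R^n$. The strategy is to evaluate $\kappa_*\Theta$ against pairs $\xi_i, \xi_j$ and show that the resulting $\mathfrak{g}$-valued $2$-form pairs with an arbitrary $X \in \mathfrak{g}$ to give, up to the normalization factor, the matrix of $\alpha_x(X) \in \so(n)$ in the basis $\{e_i\}$.

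The key intermediate step is the link between the curvature $\Theta$ and the operator $\omega_x$. By the structure equation, $\Theta(\tilde\xi_i,\tilde\xi_j)$ (evaluated on horizontal lifts) equals $-\theta([\tilde\xi_i,\tilde\xi_j])$, and the bracket of two horizontal lifts projects to $[\xi_i,\xi_j]$ on $M$ while its vertical part records the curvature. On the other hand, the definition of $\omega_x$ came from $\nabla^P_{\tilde X}\tilde\xi = \nabla^P_{\tilde\xi}\tilde X$ being the lift of $\omega_x(X)\xi$; combining the Koszul formula for $\nabla^P$ with the explicit metric $\langle v,w\rangle = \langle v,w\rangle_M + \langle\theta(v),\theta(w)\rangle_\mathfrak{g}$ expresses $\langle \omega_x(X)\xi_i,\xi_j\rangle_M$ in terms of $\langle X, \Theta(\tilde\xi_i,\tilde\xi_j)\rangle_\mathfrak{g}$ (this is essentially the computation behind \cite{bgv}*{Ch.~5}). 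Tracking constants carefully, one gets that $\langle \kappa_*\Theta(\xi_i,\xi_j), X\rangle_\mathfrak{g}$ equals $\langle 2\omega_x(X)\xi_i, \xi_j\rangle_M$, i.e.\ the $(i,j)$-entry of the skew matrix $\alpha_x(X)$. Hence under the metric identification $\mathfrak{g}^* \cong \mathfrak{g}$ and $\Omega^2(M) \cong \wedge^2\mathfrak{X}(M)$, the element $c\bigl((\kappa_*\Theta)^\sharp_x\bigr) \in \mathfrak{g} \otimes \wedge^2(\R^n)$ is $\sum_i X_i \otimes (\text{the }2\text{-vector dual to }\alpha_x(X_i))$, and the $2$-vector dual to $\alpha_x(X)$ is exactly $\sum_{i<j}\langle \alpha_x(X)e_i, e_j\rangle\, e_i\wedge e_j$.

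It then remains to identify this with $2\Lambda = 2\sum_i X_i \otimes \lambda_x(X_i)$, where $\lambda_x = \sigma \circ \gamma_x$. This is the classical fact that the Chevalley map intertwines the adjoint action of $\spin(n)$ on $\Cl(n)$ (i.e.\ $v \mapsto [\gamma_x(X), v]$, which by \eqref{eq:cliffrep} is $\alpha_x(X)$) with the standard action of $\wedge^2(\R^n) \cong \so(n)$ on $\R^n$; concretely, for $A \in \so(n)$ with skew matrix $(a_{ij})$, the corresponding element of $\spin(n)$ is $\tfrac14\sum_{i,j} a_{ij} e_i e_j$, whose Chevalley image is $\tfrac12\sum_{i<j} a_{ij} e_i\wedge e_j$. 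Applying this to $A = \alpha_x(X)$ shows $\lambda_x(X)$ is one-half the $2$-vector dual to $\alpha_x(X)$, which is precisely the factor-of-$2$ discrepancy needed. The main obstacle I anticipate is pinning down all the normalization constants — the $1/2$ in ``$2\omega_x$'', the $1/4$ versus $1/2$ in the $\spin(n) \leftrightarrow \so(n)$ correspondence, and the sign conventions in the structure equation — so that the three factors-of-two conspire to yield exactly $2\Lambda$ rather than $\Lambda$ or $4\Lambda$; everything else is bookkeeping once the identification $\kappa_*\Theta \leftrightarrow 2\omega_x$ is established.
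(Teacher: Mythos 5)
Your proposal is correct and follows essentially the same route as the paper: pair everything with $X\in\mathfrak{g}$ and identify both sides with $\sum_{i<j}\langle\alpha_x(X)e_i,e_j\rangle\,e_i\wedge e_j$, using the relation between $\Theta$ and $\omega_x$ on one side and the explicit $\spin(n)$--$\so(n)$ formula for $\gamma_x(X)$ on the other. The only difference is that you re-derive the key identity $\langle X,\Theta_p(\tilde\xi_i,\tilde\xi_j)\rangle = 2\langle\nabla^P_{\tilde X}\tilde\xi_i,\tilde\xi_j\rangle$ from the structure equation and the Koszul formula, whereas the paper simply cites it (Duistermaat, Lem.~9.1); your constants and signs check out.
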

\begin{rmk}
A straightforward consequence is that, for any $\varphi\in \R[[\mathfrak{g}^*]]^\mathfrak{g}$ and any $x\in M$, we have
\begin{equation}
 \mathcal{A}(\varphi) = \CW(\varphi)_x,
\label{eq:cwev}
\end{equation}
where $\mathcal{A}$ is the following composition of algebra homomorphisms:
\[ \R[[\mathfrak{g}^*]]^\mathfrak{g} \to \wedge(\R^n)\to \wedge T_xM \to \wedge T_x^*M, \]
where the first map is the evaluation at $\Lambda/\pi i$; the second is the inverse of $\wedge(T_xM)\xrightarrow{c}\wedge(\R^n)$; and the last is the lowering of indices.
\end{rmk}
\begin{proof}
For $X\in\mathfrak{g}$, let $\langle X,\Lambda\rangle$ denote the inner product of $X$ with the $\mathfrak{g}$-factors of $\Lambda$, so that $\langle X,\Lambda\rangle=\lambda(X)$. It is sufficient to check that $\langle X,2\Lambda\rangle= c\langle X, (\kappa_*\Theta)_x^\sharp\rangle$, that is, $2\lambda(X)=c\langle X, (\kappa_*\Theta)_x^\sharp\rangle$. 

Relation \eqref{eq:cliffrep} dictates that 
\[ \gamma_x(X) = -\frac{1}{2}\sum_{i<j}\langle e_i,\alpha_x(X)e_j\rangle e_ie_j, \]
where $\{e_i\}_{i=1}^n$ is the standard orthonormal basis for $\R^n$. This implies that
\[ \lambda(X) = -\frac{1}{2}\sum_{i<j} \langle e_i,\alpha_x(X)e_j\rangle e_ie_j.\]

Now let  $\xi_i$ denote the image of $e_i$ under the inverse of $\Cl(T_xM)\xrightarrow{c} \Cl(n)$. Let $\tilde \xi_i$ denote the horizontal lift of $\xi_i$ at, say, $p$ in the fiber of the projection $\kappa\colon P\to M$ over $x$. Then
\[ c\langle X,\kappa_*\Theta_x^\sharp\rangle =\sum_{i<j} \langle X, \Theta_p(\tilde\xi_i,\tilde\xi_j)\rangle e_ie_j.\]
The curvature form $\Theta$ and the Riemannian connection $\nabla^P$ for $\mathfrak{X}(P)$ satisfy (see \cite{duistermaathk}*{Lem.~9.1, p.~102})
\[ \langle X, \Theta_p(\tilde\xi_i,\tilde\xi_j)\rangle = 2\langle \nabla_{\tilde X}\tilde \xi_i,\tilde \xi_j \rangle.\]
The right-hand side equals, by definition, $\langle \alpha_x(X)e_i,e_j\rangle$. This proves that $c\langle X,\kappa_*\Theta_x^\sharp\rangle = 2\lambda(X)$ as desired.
\end{proof}

Combining Lemmas \ref{lem:intcw} and \ref{lem:chernweil}, we have what is essentially the local index theorem for Dirac operators:
\begin{prop}\label{prop:locindrho}
Assume the notation in Proposition \ref{prop:dtindprg}. Let  $j_\mathfrak{g}$ and $j_M$ be the analytic functions on $\mathfrak{g}$ defined by:
\[
 j_\mathfrak{g}(X) = \det\nolimits^{1/2}\biggl[\frac{\sinh(\ad(X/2))}{{\ad}(X/2)}\biggr],\qquad
 j_M(X) = \det\nolimits^{1/2}\biggl[\frac{\sinh(\alpha(X)/2)}{\alpha(X)/2}\biggr].
 \]
Let $f$ be a smooth class function on $G$. Fix a point $x\in M$, and let $p$ an arbitrary point in the fiber over $x$. Let 
\[ I(t) := \int_G\Str(P_t(p,p\cdot g)\nu(g)^{-1})f(g)\,dg.\]
This integral does not depend on the choice of $p$. Moreover, for $t\to0+$, the integral is of $O(t^{1/2})$ if $n$ (the dimension of $M$) is odd; if $n$ is even, then
\[
 I(t) =\frac{1}{(4\pi)^{n/2}} \Str(F_x(-2\Lambda))+O(t),
\]
where $F_x$ is the following product of analytic functions on $\mathfrak{g}$:
\[
 F_x := j_\mathfrak{g} j_M^{-1} \tr(e^{-\tau_x}) \exp^*(f).
\] 
\end{prop}
\begin{proof}
We have already observed in Proposition \ref{prop:dtindprg} that the integral $I(t)$ is independent of the choice of $p$ in the fiber over $x$. To prove the rest of our claim, we change the domain of the integral $I(t)$ from $G$ to $\mathfrak{g}$ by means of the exponential map. As it is well-known, $j_\mathfrak{g}^2(X)$ calculates the Jacobian determinant of the exponential map when the exponential map is diffeomorphic near $X$. So
\begin{align*}
 I(t)&=\int_\mathfrak{g}\Str\bigl( P_t(p,p\cdot \exp(X))
	e^{-\nu_*(X)}\bigr)j_\mathfrak{g}^2(X)
	f(\exp(X))
	\,dX\\
&= \int_\mathfrak{g} \Str\bigl(P_t(p,p\cdot\exp X)  e^{-\gamma(X)}\bigr)
j_\mathfrak{g}^2(X)\tr(e^{-\tau_x(X)})
	f(\exp(X))\,dX.
\end{align*}
Note that we have used the relation $\nu_*=\gamma+\tau$. 

The asymptotic expansion of $P_t$ for $t\to0+$ is well-known. See, for instance, Berline, Getzler, and Vergne \cite{bgv}*{Thm.~5.8, p.~174}, from which we deduce that 
\[
I(t) \sim \frac{1}{(4\pi t)^{n/2}} \sum_{m=0}^\infty t^m \Str\Bigl( \int_\mathfrak{g} h_t(X) \Phi_m(x,X) e^{-\lambda(X)}\,dX\Bigr),
\]
where $h_t$ is the Gaussian function on $\mathfrak{g}$, and 
\[ \Phi_0(x,X)=j_\mathfrak{g}(X)j_M^{-1}(X)\tr(e^{-\tau_x(X)})
	f(\exp(X)). \]
The rest of the argument proceeds similarly to that found in Berline and Vergne's proof \cite{berlinevergne} of the Atiyah-Singer index theorem (see \cite{bgv}*{$\S$~5.4}). In short, by Lemma \ref{lem:intcw} and the representation theory of Clifford algebras, we conclude that
\[ I(t)=O(t^{1/2}) \]
if $n$ is odd, and
\begin{align*}
 I(t) &= \frac{1}{(4\pi t)^{n/2}}\Str\Bigl(\int_\mathfrak{g}h_t(X)\Phi_0(x,X)e^{-\lambda(X)}\,dX\Bigr)+O(t)\\
 &=\frac{1}{(4\pi)^{n/2}} \Str(\Phi_0(x,-2\Lambda))+O(t) 
\end{align*}
if $n$ is even. 
\end{proof}

\section{Proof of the Theorem}
\begin{thm}\label{thm:dufcwind}
Let $M$ be a closed oriented Riemannian manifold of dimension $n$. Let $P$ be a principal bundle over $M$ whose fibers are isomorphic to a compact Lie group $G$. Let $E$ be a $G$-vector space that is also a graded $\Cl(n)$-module. Let $D$ be a Dirac operator on $\Gamma(P\times_GE)$ associated with a connection $\theta$ on $P$. Let $\tilde D$ be the lift of $D$. Suppose the connection $\theta$ satisfies the $\mathfrak{g}$-spin condition \eqref{eq:kequivcliff}. Then the distributional index $[\tilde D]$ of $\tilde D$ satisfies
\begin{equation}
 \langle \Duf^{-1}[\tilde D],\varphi \rangle = \langle \hat D\cupp\hat \varphi,\hat M\rangle
 \label{eq:distindduf}
\end{equation}
for any $G$-invariant analytic function $\varphi$ defined on some neighborhood of the origin in $\mathfrak{g}$, where $\hat D$ is the index class of $D$; $\hat \varphi$ is the characteristic class on $M$ obtained from $\varphi$ by the Chern-Weil homomorphism, and $\hat M$ is the fundamental homology class of $M$ determined by the orientation. 
\end{thm}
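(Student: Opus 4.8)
The plan is to reduce the left-hand side of \eqref{eq:distindduf} to a pairing of the distribution $[\tilde D]$ against a class function on $G$, to evaluate that pairing by combining the heat-kernel formula of Proposition~\ref{prop:dtindprg} with the local computation of Proposition~\ref{prop:locindrho}, and finally to identify the resulting integral over $M$ as the Chern--Weil realization of $\langle\hat D\cupp\hat\varphi,\hat M\rangle$. First I would use that $[\tilde D]=\sum_{\rho\in\hat G}\Ind(D_\rho)\chi_\rho$ is a conjugation-invariant distribution which, by the remark following Proposition~\ref{prop:dtindprg} together with \cite{atiyah}*{Thm.~2.2}, has point support at the identity $e\in G$; it therefore determines an element of $\mathcal Z(\mathfrak g)$, and $\Duf^{-1}[\tilde D]\in S(\mathfrak g)^G$ is the $G$-invariant distribution supported at the origin of $\mathfrak g$ that corresponds to it. The realization of the Duflo isomorphism through the exponential map --- equivalently, the Kirillov character formula; see \cite{alekmein} --- then gives, for any $G$-invariant function $\varphi$ defined near $0\in\mathfrak g$,
\[
 \langle\Duf^{-1}[\tilde D],\varphi\rangle=\langle[\tilde D],f\rangle,
\]
where $f$ is any smooth class function on $G$ whose germ at $e$ is determined by $\varphi$ through the exponential map and the function $j_\mathfrak g$; the choice of extension of $f$ away from a neighbourhood of $e$ is immaterial because $[\tilde D]$ is supported at $e$.

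Next, Proposition~\ref{prop:dtindprg} applied to the class function $f$, together with Fubini, gives $\langle[\tilde D],f\rangle=\int_M I(t)\,dx$ for every $t>0$, where $I(t)$ is the integral of Proposition~\ref{prop:locindrho}. I would then let $t\to0+$. If $n$ is odd, Proposition~\ref{prop:locindrho} gives $I(t)=O(t^{1/2})$, so $\langle[\tilde D],f\rangle=0$, and the right-hand side of \eqref{eq:distindduf} likewise vanishes because $\hat D\cupp\hat\varphi$ lies in even degrees while $\hat M$ has degree $n$. If $n$ is even, Proposition~\ref{prop:locindrho} yields
\[
 \langle[\tilde D],f\rangle=\frac{1}{(4\pi)^{n/2}}\int_M\Str\bigl(F_x(-2\Lambda)\bigr)\,dx,
 \qquad
 F_x=j_\mathfrak g\, j_M^{-1}\,\tr(e^{-\tau_x})\,\exp^*f,
\]
where the $\mathfrak g$-spin hypothesis has been used, through Lemma~\ref{lem:chernweil} and Proposition~\ref{prop:locindrho}, to split off the twisting representation $\tau_x$ and to relate $\Lambda$ to the curvature $\Theta$; after substituting for $\exp^*f$ from the first step, the integrand is an explicit $G$-invariant analytic expression in $j_\mathfrak g$, $j_M^{-1}$, $\tr(e^{-\tau_x})$ and $\varphi$, evaluated at the curvature variable $-2\Lambda$.

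For the final identification I would invoke Lemma~\ref{lem:chernweil} and the identity \eqref{eq:cwev}: evaluation of a $G$-invariant analytic function at $-2\Lambda$, followed by transport to $\wedge T_x^*M$, reproduces --- up to the normalizing constants supplied by $\tfrac{1}{(4\pi)^{n/2}}\Str$ --- the value at $x$ of the closed Chern--Weil form on $M$ built from the curvature $\Theta$ of $\theta$. Thus $\varphi$ contributes $\CW(\varphi)$, which represents $\hat\varphi$, while $j_M^{-1}$, $\tr(e^{-\tau_x})$ and the surviving powers of $j_\mathfrak g$ together contribute a closed form representing the index class $\hat D=\hat A(M)\cupp\ch(P\times_GW)$ --- this latter fact is exactly the Atiyah--Singer local index theorem for the Clifford-module operator $D$, recovered here as the special case $f\equiv1$ and established by Berline and Vergne's heat-kernel method, cf.\ \cite{bgv}*{\S~5.4}. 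Since $\tfrac{1}{(4\pi)^{n/2}}\Str$ over the Clifford module $S$ is the normalized Berezin integral that multiplies these forms and extracts the top-degree component, $\tfrac{1}{(4\pi)^{n/2}}\int_M\Str(F_x(-2\Lambda))\,dx$ equals the integral over $M$ of the wedge product of $\CW(\varphi)$ with a closed form representing $\hat D$, which is $\langle\hat D\cupp\hat\varphi,\hat M\rangle$. This establishes \eqref{eq:distindduf}.

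I expect the main obstacle to be carrying out the first and third steps in mutually compatible normalizations: pinning down the Duflo isomorphism as a precise statement about the distribution $[\tilde D]$ on $G$ and its pullback on $\mathfrak g$, and checking that the power of $j_\mathfrak g$ it introduces fits together with the factor $j_\mathfrak g$ produced by the heat kernel on $P$ in Proposition~\ref{prop:locindrho} so as to yield exactly the Chern--Weil representative of $\hat D$. This reconciliation, through the auxiliary factor $j_\mathfrak g$, of the normalization built into the Duflo isomorphism with the one built into Chern--Weil theory is the conceptual heart of the statement; granting it, the remainder is assembly of results already proved.
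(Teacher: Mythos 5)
Your outline follows the paper's own route step for step: convert $\langle\Duf^{-1}[\tilde D],\varphi\rangle$ into a pairing of $[\tilde D]$ with a class function on $G$ built from $\varphi$ via the exponential map and $j_\mathfrak{g}$, evaluate that pairing with Proposition~\ref{prop:dtindprg}, let $t\to0+$ using Proposition~\ref{prop:locindrho}, and identify $\frac{1}{(4\pi)^{n/2}}\Str(F_x(-2\Lambda))$ with a product of Chern--Weil forms via Lemma~\ref{lem:chernweil} and \eqref{eq:cwev}; the odd-dimensional case and the appeal to the local index theorem for the $\hat D$-factor are also as in the paper.

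The one genuine gap is exactly the step you defer at the end (``granting it, the remainder is assembly''): you never specify which class function $f$ realizes $\Duf^{-1}$, nor verify that no stray power of $j_\mathfrak{g}$ contaminates the representative of $\hat D$. This is not something to grant --- it is the only part of the proof that is not pure assembly, and it closes in two short moves. First, on $G$-invariant distributions supported at the origin the Duflo map is exactly $\Duf=\exp_*\circ\,j$, with $j$ multiplication by $j_\mathfrak{g}$ (no Kirillov formula is needed); hence $\Duf^{-1}=j^{-1}\circ\log_*$ and one may take $f=\log^*(j_\mathfrak{g}^{-1}\psi\varphi)$ for any $G$-invariant bump function $\psi$, the choice being irrelevant by point support, as you note. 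Second, with this $f$ one has $\exp^*(f)=j_\mathfrak{g}^{-1}\psi\varphi$ near the origin, so in the leading coefficient of Proposition~\ref{prop:locindrho} the $j_\mathfrak{g}$-bookkeeping is trivial: either the factor $j_\mathfrak{g}$ there cancels against the $j_\mathfrak{g}^{-1}$ introduced by $\Duf^{-1}$, or (as the paper argues) any residual factor is harmless because $j_\mathfrak{g}(-2\Lambda)=1$, an identity checked by a computation of the same kind as the vanishing of $\sum_i\lambda(X_i)\lambda(X_i)$ used for Lemma~\ref{lem:intcw}. After that, the only surviving factors are $j_M^{-1}$, $\tr(e^{-\tau})$ and $\varphi$, so the top-degree form is $\CW(j_M^{-1}\tr(e^{-\tau}))\CW(\varphi)\bigr|^{\topf}$, whose integral is $\langle\hat D\cupp\hat\varphi,\hat M\rangle$ exactly as you conclude. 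So your proposal is correct in structure and matches the paper, but a complete proof must include this normalization argument rather than postulate it.
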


\begin{proof}
We begin by recalling the definition of the Duflo isomorphism in terms of distributions. Let $\mathcal{E}'(\mathfrak{g})^G_0$ denote the algebra of $G$-invariant distributions on $\mathfrak{g}$ supported at the origin. Likewise, let  $\mathcal{E}'(G)^G_e$ denote the algebra of $G$-invariant distributions on $G$ supported at the identity. The Duflo isomorphism is defined as
\[
 \Duf={\exp_*}\circ j\colon \mathcal{E}'(\mathfrak{g})^G_0\to \mathcal{E}'(G)^G_e,
\]
where $\exp_*$ denotes the push-forward induced by the exponential map, and $j$ denotes the multiplication by $j_\mathfrak{g}$. 

Since the exponential map is a local diffeomorphism on some neighborhood $U$ of the origin in $\mathfrak{g}$, there is an isomorphism  
\[ \log^*\colon C^\infty(U) \to C^\infty(\exp[U])\]
to which the pullback along the exponential map serves as a left inverse. This induces, by duality, a linear map
\[ \log_*\colon \mathcal{E}'(G)^G_e\to \mathcal{E}'(\mathfrak{g})^G_0,\]
which is inverse to the push-forward $\exp_*$ along the exponential map. Then we have $\Duf^{-1} = j^{-1} \circ \log_*$, so
\[
\langle \Duf^{-1} [\tilde D],\varphi  \rangle = \langle [\tilde D],\log^*(j_\mathfrak{g}^{-1}\psi\varphi) \rangle.
\]
Here we have included a suitable $G$-invariant bump function $\psi$  so that the pullback $\log^*(j_\mathfrak{g}^{-1}\psi\varphi)$ makes sense. This is fine since $[\tilde D]$ has point support. Invoking Equation \eqref{eq:dtindprg}, we have
\[
\langle\Duf^{-1}[\tilde D],\varphi\rangle
	= \int_{M}\int_G \Str\bigl( P_t(p,p\cdot g)
	\log^*(j_\mathfrak{g}^{-1}\varphi\psi)(g)\nu(g)^{-1} \bigr)
	\,dg\,dx. 
\]
Because the left-hand side is independent of $t$, it is sufficient to show that the right-hand side is asymptotically equal to $\langle \hat D\cupp \hat\varphi,\hat M\rangle$ as $t\to0+$ when $n$ is even, and that  it is of $O(t^{1/2})$ when $n$ is odd. 

To that end, we focus on the integral over $G$, namely, 
\[ J(t):= \int_G\Str\bigl( P_t(p,p\cdot g)
	\log^*(j_\mathfrak{g}^{-1}\varphi\psi)(g)\nu(g)^{-1} \bigr)
	\,dg.
	\]
Apply Proposition \ref{prop:locindrho} with $f$ substituted with $\log^*(j_{\mathfrak{g}}^{-1}\varphi\psi)$.  The conclusion is that 
\[ J(t)=O(t^{1/2}) \]
if $n$ is odd, and that
\[
J(t) = \frac{1}{(4\pi)^{n/2}} \Str(F_x(-2\Lambda))+O(t)
\]
if $n$ is even, where $F_x$ is the following product of analytic functions on $\mathfrak{g}$:
\[  F_x= j_\mathfrak{g} j_M^{-1}\tr(e^{-\tau_x}) \varphi. \]
Note that the bump function $\psi$ does not appear in the asymptotic expansion; this is because, in the limit $t\to0+$, the function $t\mapsto P_t(x,x\cdot g)$ is of $O(t^\infty)$ if $g$ is outside any neighborhood of the identity; so the bump function may be dropped without affecting the asymptotic behavior. 

Let $\vol$ denote the Riemannian volume form on $M$ associated with the measure $dx$ on $M$. Owing to Equation \eqref{eq:cwev}, we have 
\begin{equation*}
J(t)\,\vol = O(t^{1/2})
\end{equation*}
if $n$ is odd; if $n$ is even, then
\begin{equation}
J(t) \,\vol = \bigl.\CW(j_{M}^{-1}\tr(e^{-\tau}))\CW(\varphi)\bigr|^{\topf} +O(t), \label{eq:distindasymp2}
\end{equation} 
where the decoration $\left.\right|^{\topf}$ picks out the top degree part of the differential form at hand. Note that we have dropped the factor $j_\mathfrak{g}$ in Equation \eqref{eq:distindasymp2} because $j_\mathfrak{g}(-2\Lambda)=1$ (this can be checked by a routine calculation similar to the one involved in the proof of Lemma \ref{lem:intcw}). Since the de~Rham cohomology class of the Chern-Weil form $\CW(j_{M}\tr(e^{-\tau}))$ is the index class of $D$, we indeed have Equation \eqref{eq:distindduf}.
\end{proof}

\section*{Acknowledgements}

The author thanks Nigel Higson for suggesting Equation \eqref{eq:dufchweil} and also for the helpful conversations. This research was partially supported under NSF grant DMS-1101382.

\begin{bibdiv} 
\begin{biblist}

\bib{alekmein}{article}{
   author={Alekseev, A.},
   author={Meinrenken, E.},
   title={The non-commutative Weil algebra},
   journal={Invent. Math.},
   volume={139},
   date={2000},
   number={1},
   pages={135--172},
   issn={0020-9910},
   review={\MR{1728878 (2001j:17022)}},
   doi={10.1007/s002229900025},
}

\bib{atiyah}{book}{
   author={Atiyah, Michael Francis},
   title={Elliptic operators and compact groups},
   series={Lecture Notes in Mathematics, Vol. 401},
   publisher={Springer-Verlag},
   place={Berlin},
   date={1974},
   pages={ii+93},
   review={\MR{0482866 (58 \#2910)}},
}

\bib{bgv}{book}{
   author={Berline, Nicole},
   author={Getzler, Ezra},
   author={Vergne, Mich{\`e}le},
   title={Heat kernels and Dirac operators},
   series={Grundlehren Text Editions},
   note={Corrected reprint of the 1992 original},
   publisher={Springer-Verlag},
   place={Berlin},
   date={2004},
   pages={x+363},
   isbn={3-540-20062-2},
   review={\MR{2273508 (2007m:58033)}},
}

\bib{berlinevergne} {article}{
   author={Berline, Nicole},
   author={Vergne, Mich{\`e}le},
   title={A computation of the equivariant index of the Dirac operator},
   language={English, with French summary},
   journal={Bull. Soc. Math. France},
   volume={113},
   date={1985},
   number={3},
   pages={305--345},
   issn={0037-9484},
   review={\MR{834043 (87f:58146)}},
}

\bib{duistermaathk}{book}{
   author={Duistermaat, J. J.},
   title={The heat kernel Lefschetz fixed point formula for the spin-$c$
   Dirac operator},
   series={Modern Birkh\"auser Classics},
   note={Reprint of the 1996 edition},
   publisher={Birkh\"auser/Springer, New York},
   date={2011},
   pages={xii+247},
   isbn={978-0-8176-8246-0},
   review={\MR{2809491 (2012b:58032)}},
   doi={10.1007/978-0-8176-8247-7},
}

\bib{roe}{book}{
   author={Roe, John},
   title={Elliptic operators, topology and asymptotic methods},
   series={Pitman Research Notes in Mathematics Series},
   volume={395},
   edition={2},
   publisher={Longman},
   place={Harlow},
   date={1998},
   pages={ii+209},
   isbn={0-582-32502-1},
   review={\MR{1670907 (99m:58182)}},
}

\end{biblist}
\end{bibdiv}

\end{document}